
 \documentclass[final]{IEEEtran}



 \usepackage{hyperref}  

 \usepackage{datetime}  
\usepackage{nicematrix}
\usepackage{multicol}
\usepackage{mathtools}
\usepackage{url}
\usepackage[makeroom]{cancel}
\usepackage{soul}
 
\usepackage{amsmath,amssymb}
\usepackage{amsthm,thmtools}
\usepackage{amsfonts}
\usepackage{blkarray}

 \usepackage{graphicx}
\usepackage{enumerate}
\usepackage{color}      

\usepackage{verbatim}

\usepackage{amssymb}
\usepackage{amsbsy}
\usepackage{amsmath,lipsum}
\usepackage{mathabx}
\usepackage{setspace}
\usepackage{url}

\usepackage{float}
 \usepackage{dsfont}

\def\Aut{\mathop{\rm Aut}\nolimits}
\def\End{\mathop{\rm End}\nolimits}
\def\Hom{\mathop{\rm Hom}\nolimits}
\def\Lie{\mathop{\rm Lie}\nolimits}

 \newcommand{\Int}{\operatorname{int}}
 \newcommand{\Clos}{\operatorname{clos}}

 \newcommand{\diag}{\operatorname{diag}}

\newcommand*\diff{\mathop{}\!\mathrm{d}}



\newcommand{\such}{\, | \,}


\newtheorem{theorem}{Theorem}
\newtheorem{claim}{Claim}
\newtheorem{definition}{Definition}
\newtheorem{proposition}{Proposition}
\newtheorem{lemma}{Lemma}
\newtheorem{corollary}{Corollary}

\newtheorem{example}{Example}
\newtheorem{remark}{Remark}

\newcommand {\R}{\mathbb R}

\providecommand{\keywords}[1]
{
  \small	
  \textbf{\textit{Keywords---}} #1
}

\newcommand{\be}{\begin{equation}}
\newcommand{\ee}{\end{equation}}

\newcommand{\Tr}{\operatorname{{\mathrm Tr}}}







\usepackage{xcolor}
\usepackage{soul}

%

\usepackage{lineno}

\DeclareMathOperator{\cone}{cone}


\title{\LARGE \bf
On special quadratic Lyapunov functions for linear dynamical
systems with an invariant cone
  \\  \vspace*{20pt} \normalsize \date{} }
\author{Omri Dalin and Alexander Ovseevich and Michael Margaliot\thanks{The authors are with the School of Elec. Eng., Tel Aviv University, Israel 69978. Correspondence: michaelm@tauex.tau.ac.il \\
This research was partly supported by a research grant from the~Israel Science Foundation.  The work of AO was partly  supported by a research grant from the Ministry of Aliyah and Integration.} }

\begin{document}
\maketitle

\begin{abstract}
We consider a continuous-time linear time-invariant dynamical system that admits an invariant cone.  For the case of a self-dual and homogeneous cone we show that if the system is asymptotically stable then it admits a quadratic Lyapunov function with a special structure. The complexity of this Lyapuonv function scales linearly with the dimension of the dynamical system. 
In the particular case when the cone is the nonnegative orthant  this reduces to the well-known and important result that a positive    system admits a diagonal   Lyapunov function.
We demonstrate our theoretical results by deriving a new special quadratic Lyapunov function for systems that admit the ice-cream cone as an invariant set.
\end{abstract}

\keywords{Lie group, Lie algebra, stability analysis, special Lyapunov functions.}

\section{Introduction}
A set~$C $ is called a cone if~$x\in C$ implies that~$\alpha x\in C$ for all~$\alpha>0$.
Given a 
closed and convex cone~$C\subset\R^n$, 
let~$M(C)$ denote the set of matrices~$A\in\R^{n\times n}$ such that~$C$ is an invariant set
of the linear system
\be\label{eq:LTI}
\dot x(t)=Ax(t).
\ee
In other words,
\be\label{eq:xt}
x(0)\in C \implies  x(t) \in C \text{ for all } t\geq 0.
\ee
The set~$M(C)$ is sometimes referred to as the set of matrices that are exponentially nonnegative on~$C$ (see, e.g.,~\cite{stern_wolk}).

For example, consider the special case  where~$C$ is
  the cone
\[
\R^n_+:=\{
x\in\R^n \such   x_i \geq 0 ,\ i=1,\dots,n \},
\]
that is, the nonnegative orthant in~$\R^n$.
It is well-known (see, e.g.,~\cite{berman}) that~$M(\R^n_+)$ is the set of Metzler matrices, that is, the set of matrices with nonnegative off-diagonal terms.
The system~\eqref{eq:LTI}, with~$A$ Metzler, is called a positive system~\cite{farina2000} and has found many applications, as in many real-world models the state-variables represent quantities that cannot take negative values (e.g., concentrations, probabilities, densities, sizes of populations), and thus the dynamics must map~$\R^n_+$ to itself.

Recall that~$A\in\R^{n\times n}$ is called a~$Z$-matrix if~$-A$ is Metzler, that is, if~$-A\in M(\R^n_+)$. Some authors refer to the property~$-A\in M (C)$ as the~$Z$-property of~$A$ with respect to~$C$~\cite{gauda_tao2009}.

The problem of characterizing~$M(C)$
is an important  question in
linear algebra   and   dynamical systems theory.
Indeed, the existence of invariant sets has strong implications on the   behaviour of dynamical systems and also for control synthesis~\cite{BLANCHINI_set_invariance}. Moreover, the results for linear systems play an important role also in the analysis of nonlinear systems.
For example, the variational equation associated with the nonlinear system~$\dot x(t)=f(x(t))$ is
\be\label{eq:var}
\dot z(t)=J(x(t))z(t),
\ee
where~$J(x):=\frac{\partial}{\partial x}f(x)$ is the Jacobian  of the vector field~$f$.   Note that~\eqref{eq:var} is a   linear system,  and if this system admits an invariant set  then this has strong implications on the behaviour of the nonlinear system. The most important case is
when the nonnegative orthant is an invariant set of~\eqref{eq:var},  and then the nonlinear system is cooperative~\cite{hlsmith}. These systems, and their generalization  to control systems~\cite{Angeli2003monotone}, have many special properties. 

A matrix~$A\in M(C)$  has special properties. For example,
  cone preserving
operators  have special spectral properties~\cite[Chapter 11]{berman}. Here, we are interested in the case where~$A\in M(C)$ and $A$ is Hurwitz. It is well-known that then~$A$ admits special stability properties  (see, e.g.,~\cite{diffusive}). In particular, if~$A\in M(\R^n_+)$ and~$A$ is Hurwitz then it a admits  a
positive diagonal Lyapunov function, that is, there exists a positive diagonal matrix~$D$ such that~$DA+A^TD$ is negative-definite.  This property and its extensions have found many applications in systems theory, see, e.g.,~\cite{tanaka_brl,RANTZER201572}. One reason for this is that  the number of parameters in a diagonal matrix scales \emph{linearly} with the dimension of the system.  
Our goal here is to show that if~$A\in M(C)$, where~$C$ is a symmetric 
cone, but not necessarily~$\R^n_+$, and~$A$ is Hurwitz then it admits a quadratic Lyapunov function that 
scales linearly with the system dimension. 

Since the solution of~\eqref{eq:xt} is~$x(t)=\exp(At)x(0)$,
it is   natural to address  the problem of characterizing special Lyapunov functions   using  Lie-groups and Lie-algebras. Here we apply such an approach  to
derive an explicit  special quadratic Lyapunov function   for Hurwitz matrices~$A\in M(C)$, where~$C$ is a symmetric cone. The complexity of this Lyapunov  function     scales  linearly  with the dimension of the system.
In the special case where~$C=\R^n_+$, this reduces to a diagonal Lyapunov function.
We demonstrate our theoretical results by deriving  a new quadratic Lyapunov function for Hurwitz matrices~$A\in M(K_n)$, where~$K_n$
is the ice-cream cone in~$\R^n$.

The remainder of this note  is organized as follows. The next section reviews several  known  definitions  and results that are used later on. Section~\ref{sec:main} describes our   theoretical results. These are demonstrated  by deriving a new special quadratic Lyapunov function for a Hurwitz matrix that is exponentially nonnegative on the ice-cream cone.

 We use standard notation.
Vectors [matrices] are denoted by small [capital] letters. 
The canonical  basis of~$\R^n$ is~$e^1,\dots,e^n$.
The transpose of a matrix~$A$ is denoted by~$A^T$. A   matrix~$Q$ is positive-definite (denoted~$Q\succ 0$)  if it is symmetric and~$x^TQx>0$ for all~$x\not =0$, and negative-definite (denoted~$Q\prec 0$) if~$-Q\succ0$.
For a square matrix~$A$, $\Tr(A)$ is the trace of~$A$, and~$\det(A)$ is the determinant of~$A$.
For two matrices~$A\in\R^{n\times m},B\in\R^{m\times n}$, let~$\langle A,B\rangle:=\Tr (AB)$. The interior [closure]  of a set~$S$ is denoted by~$\Int(S)$ [$\Clos(S)$].
Given a set of matrices~$S$, we
write
\[
\cone(S)= \{ \alpha_1 A_1+\alpha_2 A_2+\dots+ \alpha_k A_k \such k\geq 1,A_i\in S,\,  \alpha_i\geq 0 \}
\]
for the cone generated by all finite convex combinations of the matrices in~$S$.
The Minkowski sum 
of two sets~$S_1,S_2$ is
$
S_1+S_2:=\{ a+b\such a\in S_1,b\in S_2\}.
$ 

 \section{Preliminaries}
We will demonstrate the theoretical results using the ice-cream cone  (also called   Lorentz cone,   light
 cone, and second-order cone).

\subsection{Ice-cream cone}
Recall that a cone~$C\subset\R^n$ is called \emph{ellipsoidal} if
\[
C=\{x\in\R^n \such x^TQx\leq 0, (u^n)^T x\geq 0\},
\]
where~$Q\in\R^{n\times n}$ is a symmetric matrix with eigenvalues
$
\lambda_1\geq\dots\geq\lambda_{n-1}>0>\lambda_n,
$
and corresponding eigenvectors~$u^1,\dots, u^n$ \cite{wolko1991}.

The ice-cream  cone\footnote{We use the terminology from \cite{stern_wolk}. This cone is an important  object in both physics and mathematics, and is  also known as the  light cone or the Lorentz cone.}~$K_n\subset \R^n$ is the special case
of an ellipsoidal cone obtained for 
\[
Q_n:=\diag( 1, 1,\dots, 1,-1)\in\R^{n\times n},
\]
and~$u^n=e^n$, that is,
\begin{equation}\label{lcone}
 K_n:=\{x\in\R^n
 \such \sum_{i=1}^{n-1}x_i^2\leq x_n^2,\,x_n\geq0\}.
\end{equation}

Below we demonstrate our theoretical results for 
matrices~$A\in M(K_n)$.  Since every    ellipsoidal cone is a nonsingular linear
transformation of the ice-cream cone~\cite{wolko1991}, these results can be easily generalized
to ellipsoidal cones. 

\subsection{Automorphisms and endomorphisms of a cone}

Given a  cone~$C\subset\R^n$, and a linear map $\phi:\R^n\to\R^n$ (that can be identified with a matrix in~$\R^{n\times n}$), let
\[
\phi(C):=\{\phi(x)\such x\in C\}.
\]
 A cone $C$ defines two algebraic objects.
 \begin{definition}
The group  of   automorphisms of~$C$, denoted
   $\Aut(C)$,   is the set of all linear maps (i.e., matrices) $\phi:\R^n\to\R^n$ such that $\phi(C)=C$. Let~$\Aut_0(C)$ denote the connected component of the unit element of $\Aut(C)$. 
    \end{definition}

    \begin{definition}
The semigroup  of endomorphisms of~$C$, denoted $\End(C)$, is the set of all   linear maps~$\phi:\R^n\to\R^n$ such that~$\phi(C)\subseteq C$.
 \end{definition}

More generally, given two cones $C_1$ and~$C_2$, let~$\Hom(C_1,C_2)$ denote the set of linear maps~$\phi:\R^n\to\R^n$ such that~$\phi(C_1)\subseteq C_2$.

It is well-known 
that  the group $\Aut(C)$ is a closed subgroup in the group of invertible matrices, and therefore, by   Cartan's theorem~\cite{serre} it is a  Lie group.

The semigroup~$\End(C)$ is a manifold with corners.
Let~$H $ denote either~$ \Aut(C)$ or~$  \End(C)  $.
Then a tangent cone~$T_h(H)$ is defined at any point~$h$ of any of these two objects~$H$. By definition,  $T_h(H)$ is the set of tangent vectors:~$\left.\frac{d}{dt}\right\vert_{t=0}\gamma(t)$ to smooth curves~$\gamma:[0,\varepsilon)\to H$, with~$\varepsilon>0$ and~$\gamma(0)=h$.

In the case of~$\Aut(C)$ the tangent cone $T_h(\Aut(C))$ is just the tangent space at~$h$,
and for the particular case~$h=I_n$ it is the Lie algebra:
  $\Lie \Aut(C) := T_{I_n}(\Aut(C))$.

\begin{example}\label{diagonal_case}
    For the case where~$C$ is the
     nonnegative orthant~$\R^n_+$, it is straightforward to verify that~$\Aut(\R^n_+)$ consists of matrices of the form~$\Sigma D$, where~$\Sigma $ is a  permutation matrix, and~$D \in \mathcal D_+$, with
    $$
    \mathcal D_+:=\{\diag(d_1,\dots,d_n)\such d_i>0\}.
    $$
    In particular, the connected component $\Aut_0(\R^n_+)$ of~$I_n$ in~$\Aut(\R^n_+)$ is~$\mathcal D_+ $.
    Also,   $\End(\R^n_+)$ is the semigroup (under multiplication) of matrices with nonnegative entries.
\end{example}

\begin{example}\label{exa:k_n}
For the case where~$C$ is the ice-cream cone~$K_n$, 
it follows from \cite[Theorem~3.31]{stern_wolk} that  
  if~$H=\exp(\alpha I_n+B)$, with~$\alpha \in \R$,
\[  
B=\begin{bmatrix} B_1 & b \\b^T& 0 \end{bmatrix},
\]
 $B_1$ an~$(n-1)\times(n-1)$ skew-symmetric matrix, and~$b\in \R^{n-1}$
 then~$H\in \Aut_0(K_n)$.  Moreover, any~$H\in\Aut_0(K_n)$ that is sufficiently close to $I_n$
 admits  this exponential form.   In particular, the Lie algebra $\Lie\Aut(K_n)$ consists of matrices of the form $\alpha I_n+B$.
\end{example}

\subsection{Duality theory}
The dual cone of~$C$ (see,  e.g.,~\cite{rock}) is
$$C^\vee:=\{y\in\R^n\such   y^T x \geq0 \mbox{ for all }x\in C\}.$$
In particular,
\be\label{eq:cvv}
(C^\vee)^\vee:=\{z\in\R^n\such   z^T y \geq0 \mbox{ for all }y\in C^\vee\}.
\ee
The  cone~$C$ is called  self-dual if~$C^\vee=C$.
For example,~$\R^n_+$ and~$K_n$ are self-dual.

The duality theorem~\cite{rock} asserts that if~$C$ is closed and convex then  \be\label{eq:dual}
(C^{\vee})^{\vee}=C.
\ee
 
 \begin{remark} \label{rem:using_dual}
 Eq.~\eqref{eq:dual} is useful in analyzing the sets~$\End(C)$,~$\Aut(C)$, and~$M(C)$. To demonstrate this, assume that~$x\in C$ and that~$(Ax)^T\xi <0$ for some~$\xi\in C^\vee$.
Then~\eqref{eq:cvv} and~\eqref{eq:dual} imply that~$Ax\not\in (C^\vee)^\vee=C$,
so we conclude that~$A\not \in\End(C)$.
 \end{remark} 
 
 Let $C\subseteq \R^n$ be a closed   convex cone.
Recall that a matrix~$A$ is called
\emph{cross-positive on~$C$}~\cite{Cross-Positive1970} if for any $x\in C$ and $\xi\in C^\vee$ satisfying~$  x^T \xi =0$, we have that~$  (Ax)^T \xi \geq0$. Note that if~$A$ is cross-positive on~$C$ then~$A^T$ is
cross-positive on~$C^\vee$.

 \begin{theorem}\label{thm:matzler2} (see, e.g.~\cite[Thm.~3]{Cross-Positive1970})
Let $C\subset\R^n$ be a closed   convex cone.
Then~$M(C)$ is  the set of matrices    that are cross-positive on~$C$.
\end{theorem}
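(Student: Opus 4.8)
The statement is an equivalence, $A\in M(C)\iff A$ is cross-positive on $C$, so I must prove both inclusions. Denote the cross-positivity condition by $(\star)$: for every $x\in C$ and $\xi\in C^\vee$ with $x^T\xi=0$ one has $(Ax)^T\xi\ge0$. Throughout I would use the duality identity $(C^\vee)^\vee=C$ of \eqref{eq:dual}, which lets me certify that a vector lies in $C$ by testing it against every functional in $C^\vee$.

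For necessity, suppose $A\in M(C)$, fix admissible $x,\xi$ (so $x\in C$, $\xi\in C^\vee$, $x^T\xi=0$), and set $f(t):=\xi^T\exp(At)x$. Since $\exp(At)x\in C$ for all $t\ge0$ and $\xi\in C^\vee$, we get $f(t)\ge0$ on $[0,\infty)$, while $f(0)=\xi^Tx=0$. Hence $t=0$ minimizes $f$ over $[0,\infty)$, and its right derivative satisfies $0\le f'(0)=\xi^TAx=(Ax)^T\xi$, which is exactly $(\star)$. This is just the infinitesimal form of the observation in Remark~\ref{rem:using_dual}, and it is the routine direction.

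The sufficiency direction, $(\star)\Rightarrow A\in M(C)$, is the main obstacle, because an infinitesimal sign condition must be upgraded to invariance of the flow for all $t\ge0$. I would argue by a Gronwall estimate on the squared distance to $C$. Fix $x_0\in C$, write $x(t):=\exp(At)x_0$, and set $q(t):=\tfrac12\dist(x(t),C)^2$. Since $C$ is closed and convex, $q$ is differentiable with $\dot q(t)=\langle x(t)-P_C(x(t)),Ax(t)\rangle$, where $P_C$ is the metric projection. Writing $c:=P_C(x(t))$ and $\xi:=x(t)-c$, the projection inequality gives $\xi\in N_C(c)$; because $C$ is a cone, testing the definition of the normal cone against $0$ and $2c$ shows $N_C(c)=\{-\eta:\eta\in C^\vee,\ \eta^Tc=0\}$. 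Thus $\xi=-\eta$ with $\eta\in C^\vee$, $\eta^Tc=0$, and $(\star)$ applied at $c$ yields $\xi^TAc=-(Ac)^T\eta\le0$. Consequently $\dot q=\xi^TAc+\xi^TA\xi\le\xi^TA\xi\le\|A\|\,\|\xi\|^2=2\|A\|\,q$, and since $q(0)=0$ and $q\ge0$, Gronwall forces $q\equiv0$; that is, $x(t)\in C$ for all $t\ge0$, i.e. $A\in M(C)$.

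The delicate points, and hence where I expect the real work, are all in this last paragraph: justifying that $q$ is $C^1$ with gradient $x-P_C(x)$ (Moreau's identity for the squared distance), and verifying the normal-cone computation $N_C(c)=-\{\eta\in C^\vee:\eta^Tc=0\}$ that converts $(\star)$ into the sign estimate $\xi^TAc\le0$. An equivalent but more abstract route would identify $(\star)$ with the Nagumo sub-tangentiality condition $Ax\in T_C(x)$ for every $x\in C$ (using the same normal-cone computation together with the bipolar theorem) and then invoke Nagumo's invariance theorem for the globally Lipschitz field $x\mapsto Ax$; I prefer the distance-function argument since it is self-contained and makes the role of $(\star)$ explicit.
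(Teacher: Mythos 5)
Your proof is correct and complete. There is nothing in the compiled paper to compare it against: Theorem~\ref{thm:matzler2} is stated with a citation to Schneider and Vidyasagar and no proof is given (the source contains only a commented-out sketch which asserts that $A\in M(C)$ iff $x+\varepsilon Ax+o(\varepsilon)x\in C$ for all $x\in C$ and all small $\varepsilon>0$, and then dualizes; the ``if'' half of that assertion is exactly the hard implication and is not justified there). Your write-up supplies precisely that missing content. The necessity direction via $f(t)=\xi^{T}\exp(At)x$, $f\ge 0$ on $[0,\infty)$, $f(0)=0$, hence $f'(0)=(Ax)^{T}\xi\ge 0$, is the routine infinitesimal version of Remark~\ref{rem:using_dual}. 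The sufficiency direction is where the content lies, and your distance-function argument is sound on all the points you flag: for closed convex $C$ the map $x\mapsto\tfrac12\dist(x,C)^{2}$ is indeed $C^{1}$ with gradient $x-P_{C}(x)$ (Moreau), the normal-cone identity $N_{C}(c)=\{-\eta\,:\,\eta\in C^{\vee},\ \eta^{T}c=0\}$ follows as you say by testing the projection inequality against $y=0$ and $y=2c$, and the resulting estimate $\dot q=\xi^{T}Ac+\xi^{T}A\xi\le 2\|A\|\,q$ with $q(0)=0$ closes via Gronwall. This is Nagumo's invariance theorem specialized to a linear vector field and made self-contained, which is arguably preferable to citing Nagumo outright; the only cosmetic caveat is that $q$ is merely $C^{1}$ (in fact $C^{1,1}$) rather than smooth, but that is all the differential inequality requires.
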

\begin{remark}\label{rem:mc_cone}
Note that this implies in particular that the set of matrices~$M(C)$ is a closed convex   cone in~$\R^{n\times n}$.
\end{remark}

\begin{remark}
Let~$e^1,\dots,e^n$ denote the standard basis in~$\R^n$.
Consider~$C=\R^n_+$,
and take~$x=e^i$, $\xi=e^j$, with~$i\not = j$.
Then the condition in  Theorem~\ref{thm:matzler2} reduces  to the Metzler condition:~$a_{ ij}\geq0$ for~$i\neq j$.
\end{remark}

 The next  result provides a Lie-algebraic characterization of~$M(C)$.
For a set~$S$, let~$\Clos(S)$ denote the closure of~$S$.

\begin{theorem}\label{thm:cone_thm}
Let $C\subset\R^n$ be a closed convex cone.
Then
\be\label{eq:colsure}
M(C)=\Clos \left   ( \Lie \Aut(C)+\End(C) \right ),
\ee
where~$+$ denotes the Minkowski sum.
\end{theorem}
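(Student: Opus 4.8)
The plan is to prove the two inclusions separately, using throughout the fact (Remark~\ref{rem:mc_cone}) that $M(C)$ is a closed convex cone, together with the elementary reformulation that $A\in M(C)$ precisely when $\exp(At)\in\End(C)$ for every $t\geq 0$ (invariance of $C$ is the same as $\exp(At)C\subseteq C$).

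For the inclusion $\Clos(\Lie\Aut(C)+\End(C))\subseteq M(C)$, since $M(C)$ is closed it suffices to show the Minkowski sum itself lies in $M(C)$, and for this I would first place each summand inside $M(C)$. For $E\in\End(C)$ this follows from the power-series expansion $\exp(Et)x=\sum_{k\geq 0}\tfrac{t^k}{k!}E^kx$: each $E^kx$ lies in $C$, so for $t\geq 0$ the partial sums are nonnegative combinations of points of $C$ and hence lie in the convex cone $C$, and their limit stays in $C$ because $C$ is closed; thus $\exp(Et)C\subseteq C$ and $E\in M(C)$. For $L\in\Lie\Aut(C)$ the one-parameter subgroup $\exp(Lt)$ remains in the Lie group $\Aut(C)$, so $\exp(Lt)C=C$ for all $t$ and in particular $L\in M(C)$. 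As $\Lie\Aut(C)$ and $\End(C)$ are both convex cones contained in the convex cone $M(C)$, so is their Minkowski sum.

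The more interesting inclusion is $M(C)\subseteq\Clos(\Lie\Aut(C)+\End(C))$. Here I would exploit two easy membership facts: every scalar multiple $\alpha I_n$ lies in $\Lie\Aut(C)$, because $t\mapsto e^{\alpha t}I_n$ is a smooth curve in $\Aut(C)$ through $I_n$ with derivative $\alpha I_n$ at $t=0$; and $\End(C)$ is a cone, so $\tfrac1t\exp(At)\in\End(C)$ whenever $A\in M(C)$ and $t>0$. Given $A\in M(C)$, I would then split the difference quotient as
\[
\frac{\exp(At)-I_n}{t}=\frac1t\exp(At)+\Bigl(-\frac1t\Bigr)I_n,
\]
exhibiting it as an element of $\End(C)+\Lie\Aut(C)$ for every $t>0$. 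Letting $t\to 0^+$, the left-hand side converges to $A$, so $A$ lies in the closure of the Minkowski sum.

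The main obstacle is precisely this hard inclusion, and more concretely the choice of approximating sequence; the difference-quotient decomposition above is the key idea that renders it routine, since it simultaneously uses that $\End(C)$ is a cone containing $\exp(At)$ and that $\Lie\Aut(C)$ contains all multiples of $I_n$. The remaining points are purely verificational: that $\End(C)$ is closed under addition and positive scaling (hence a convex cone, so $\tfrac1t\exp(At)$ stays inside it) and that $\alpha I_n\in\Lie\Aut(C)$. The closure on the right-hand side is then exactly what absorbs the fact that $A$ itself need not be a finite sum of an automorphism generator and an endomorphism.
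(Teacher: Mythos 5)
Your proof is correct, but for the hard inclusion it takes a genuinely different route from the paper. The paper proves $M(C)\subseteq\Clos\left(\Lie\Aut(C)+\End(C)\right)$ by duality in the space of matrices: it characterizes membership in $M(C)$, $\End(C)$ and $\Lie\Aut(C)$ via the trace pairing $\langle A,x\xi^T\rangle\geq 0$ against rank-one matrices $x\xi^T$ with $x\in C$, $\xi\in C^\vee$ (with or without the orthogonality constraint $x^T\xi=0$), identifies the dual cones of the three sets as the cones generated by these rank-one matrices, and reduces the inclusion to an inclusion among those dual cones; this leans on the cross-positivity characterization of $M(C)$ (Theorem~\ref{thm:matzler2}) and on the bipolar theorem. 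Your difference-quotient decomposition
\[
\frac{\exp(At)-I_n}{t}=\frac{1}{t}\exp(At)+\Bigl(-\frac{1}{t}\Bigr)I_n\longrightarrow A\quad(t\to 0^+)
\]
bypasses all of that: it needs only that $\exp(At)\in\End(C)$ for $t\geq0$, that $\End(C)$ is a cone, and that $\R I_n\subseteq\Lie\Aut(C)$, and it makes completely transparent why the closure operation is exactly what is needed. What the paper's duality argument buys is explicit structural information about the dual cones $(M(C))^\vee$, $(\End(C))^\vee$, $(\Lie\Aut(C))^\vee$, which your argument does not produce; what yours buys is elementarity and independence from the cross-positivity theorem. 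For the easy inclusion the two arguments are essentially the same first-order computation, though you route it through the closed-convex-cone structure of $M(C)$ (Remark~\ref{rem:mc_cone}) while the paper's Lemma~\ref{lemma:cone_1d} verifies $x+\varepsilon(A_0+A_1)x+o(\varepsilon)x\in C$ directly. One point worth stating explicitly in your write-up: the claim that $\exp(tL)\in\Aut(C)$ for every $L\in\Lie\Aut(C)$ and every $t$ uses the standard identification, via Cartan's closed-subgroup theorem, of the tangent space $T_{I_n}(\Aut(C))$ with the set of matrices whose one-parameter groups remain in $\Aut(C)$; as stated in the paper, $\Lie\Aut(C)$ is only defined through tangent vectors to smooth curves, so this identification is an extra (though standard) step.
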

\begin{remark}
In general, it is not possible to remove the ``closure of'' in~\eqref{eq:colsure}. A   quite subtle counterexample, in fact a series of counterexamples, is given
in~\cite{kuzma_omladic}.  In particular, it is shown there that~$M(C)\neq \Lie \Aut(C)+\End(C)$ for cones~$C=\{x\circ x\such  x\in V\}$, where $V$ is any Euclidean Jordan algebra of rank $\geq3$.   However, it is possible to remove the 
``closure of''
in some important cases  including
the classical  case~$C=\R^n_+$, and also  for any polygonal or  ellipsoidal cone \cite{Cross-Positive1970}, \cite{stern_wolk1994}.  
\end{remark}

\begin{example}
Consider the case~$C=\R^n_+$. In this case,
$\Lie \Aut(\R^n_+)$
is the set of
all diagonal matrices,
and $\End(\R^n_+)$ is the set of matrices with nonnegative entries, so~$\Lie \Aut(\R^n_+)+\End(\R^n_+)$
is the set of Metzler matrices, that is,~$M(\R^n_+)$.
\end{example}

According to Ref.~\cite{kuzma_omladic}, the proof of Theorem~\ref{thm:cone_thm}
appears in \cite{Cross-Positive1970}. We were not able to find such a proof there, so  for the sake of completeness, we provide a simple and self-contained proof in the Appendix.

\subsection{Stability of matrices in~$M(C)$}
We now review several    known results   characterizing
matrices   in~$M(C)$ that are Hurwitz. 
We recall the following.
\begin{definition}
    A  cone~$C\subset \R^n$ is called \emph{proper} if it is closed, convex, pointed (that is,~$x\in C$ and~$-x\in C$ implies that~$x=0$),
    and has a nonempty interior.
\end{definition}
For example,~$\R^n_+$ and~$K_n$
are  proper cones.

\begin{proposition} (see, e.g.,~\cite{diffusive})
    \label{prop:stability_criterion}
 Let~$C$ be  a proper cone, and let~$A\in M(C)$. Then the following four  statements are equivalent:
 \begin{enumerate}
     \item  $A$ is Hurwitz;
     \item $-A^{-1}\in\End(C)$; 
      \item There exists $z \in \Int( C)$ such that $-Az\in \Int ( C)$;
     \item  There exists $\xi \in \Int(C^\vee)$ such that $- A^T  \xi \in \Int (   C^\vee)$.

 \end{enumerate}
\end{proposition}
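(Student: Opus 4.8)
The plan is to prove the cyclic chain $(1)\Rightarrow(2)\Rightarrow(3)\Rightarrow(1)$ and then obtain $(4)$ for free by applying the equivalence $(1)\Leftrightarrow(3)$ to the transposed system on the dual cone. The one nonelementary ingredient on which the whole argument rests is the Perron--Frobenius property of matrices that are exponentially nonnegative on a proper cone: if $A\in M(C)$, then the spectral abscissa $\lambda:=\max\{\real\mu\such \mu\text{ is an eigenvalue of }A\}$ is an eigenvalue of $A^T$ admitting an eigenvector $\xi\in C^\vee\setminus\{0\}$. This follows by applying the classical cone Perron--Frobenius theorem (see, e.g., \cite[Ch.~11]{berman}) to the cone-preserving map $e^{A^T}$ on $C^\vee$, using that $A\in M(C)$ implies $A^T\in M(C^\vee)$ (combine Theorem~\ref{thm:matzler2} with the fact that cross-positivity on $C$ transfers to $A^T$ on $C^\vee$, together with $(C^\vee)^\vee=C$), and that $A$ and $A^T$ share the same spectrum. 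I expect this to be the main obstacle, in that it is the only place where content beyond linear algebra and the duality theorem enters.

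For $(1)\Rightarrow(2)$, note first that $A$ Hurwitz forces $0\notin\sigma(A)$, so $A$ is invertible and the integral $-A^{-1}=\int_0^\infty e^{At}\,\dif t$ converges. Since $A\in M(C)$ we have $e^{At}\in\End(C)$ for every $t\ge0$, so for any fixed $x\in C$ the integrand $e^{At}x$ lies in $C$; each Riemann sum is a nonnegative combination of such vectors and hence lies in the convex cone $C$, and closedness of $C$ passes this to the limit, giving $-A^{-1}x\in C$, i.e. $-A^{-1}\in\End(C)$. For $(2)\Rightarrow(3)$, observe that $(2)$ presupposes invertibility of $A$, hence of $-A^{-1}$. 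An invertible linear map is a homeomorphism of $\R^n$ and therefore sends $\Int(C)$ onto the interior of its image; since $-A^{-1}\in\End(C)$ maps $C$ into $C$, it maps $\Int(C)$ into $\Int(C)$. Thus, choosing any $w\in\Int(C)$ and setting $z:=-A^{-1}w$, we obtain $z\in\Int(C)$ and $-Az=w\in\Int(C)$, which is $(3)$.

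The crux is $(3)\Rightarrow(1)$. Let $\xi\in C^\vee\setminus\{0\}$ be the dual Perron eigenvector from the first paragraph, so $A^T\xi=\lambda\xi$. Pairing it against the vector $z$ from $(3)$ gives
\[
\xi^T(-Az)=-(A^T\xi)^Tz=-\lambda\,\xi^Tz .
\]
Now a nonzero functional in $C^\vee$ is strictly positive on $\Int(C)$: if $\eta\in C^\vee$ vanished at an interior point, that point could be perturbed in every direction while staying in $C$, forcing $\eta=0$. Applying this to $z\in\Int(C)$ and to $-Az\in\Int(C)$ yields $\xi^Tz>0$ and $\xi^T(-Az)>0$, so $-\lambda\,\xi^Tz>0$ forces $\lambda<0$. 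Hence every eigenvalue of $A$ has negative real part and $A$ is Hurwitz, closing the cycle and establishing $(1)\Leftrightarrow(2)\Leftrightarrow(3)$.

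Finally, $(4)$ is simply $(3)$ for the pair $(A^T,C^\vee)$. Indeed $A\in M(C)$ iff $A^T\in M(C^\vee)$, the dual cone $C^\vee$ is again proper with $(C^\vee)^\vee=C$ by the duality theorem~\eqref{eq:dual}, and $A$ is Hurwitz iff $A^T$ is Hurwitz. Therefore the already-proved equivalence $(1)\Leftrightarrow(3)$ applied to $A^T$ on $C^\vee$ reads precisely as ``$A$ is Hurwitz iff there exists $\xi\in\Int(C^\vee)$ with $-A^T\xi\in\Int(C^\vee)$,'' which is statement $(4)$.
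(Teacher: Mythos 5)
Your proof is correct in its overall architecture, but note that the paper itself offers no proof of this proposition --- it is quoted from the literature (the citation to De Leenheer's paper on diffusively coupled systems with an invariant cone), so there is no in-text argument to compare against. Your cycle $(1)\Rightarrow(2)\Rightarrow(3)\Rightarrow(1)$ plus dualization for $(4)$ is a clean and standard route: the integral representation $-A^{-1}=\int_0^\infty e^{At}\,\dif t$ for $(1)\Rightarrow(2)$, the homeomorphism argument for $(2)\Rightarrow(3)$, and the pairing of the interior points against a Perron eigenvector of $A^T$ for $(3)\Rightarrow(1)$ are all sound. The one soft spot is your one-line derivation of the key ingredient: applying the cone Perron--Frobenius theorem to $e^{A^T}$ gives an eigenvector $\xi\in C^\vee$ of $e^{A^T}$ for the eigenvalue $e^{\lambda}$, but an eigenvector of the exponential need not be an eigenvector of the generator $A^T$ (the eigenspace of $e^{A^T}$ for $e^\lambda$ can mix generalized eigenspaces of $A^T$ for eigenvalues $\lambda+2\pi i k$), so a further averaging or compactness argument is needed to extract $A^T\xi=\lambda\xi$ with $\xi\in C^\vee$; fortunately the statement you actually need is itself a classical theorem for cross-positive matrices on proper cones (Schneider--Vidyasagar), so citing it directly closes the gap. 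It is also worth knowing that the route implicit in the cited literature (and in an earlier draft of this paper) handles the hard converse direction differently: rather than invoking Perron--Frobenius, one takes $\xi\in\Int(C^\vee)$ with $A^T\xi\in-C^\vee$ and shows that the \emph{linear} function $V(x)=\xi^T x$ is a Lyapunov function on $C$, using the fact that two linear functionals strictly positive on $\Int(C)$ are comparable to get exponential decay, and then writes an arbitrary initial condition as $x^+-x^-$ with $x^\pm\in C$. That argument is more elementary (no spectral theory for cone maps), while yours is shorter once the Perron--Frobenius property is granted.
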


For the case~$C=\R^n_+$, this  implies that if~$A$ is Metzler then the following 
statements are equivalent:
 \begin{enumerate}
     \item  $A$ is Hurwitz;
     \item All the entries of~$A^{-1}  $ are nonpositive;  
     \item There exists $z \in \Int( \R^n_+)$ such that $ -A  z\in \Int (\R^n_+)$;
     \item  There exists $\xi\in \Int(\R^n_+)$ such that $ -A^T \xi \in \Int (   \R^n_+)$,
 \end{enumerate}
(see, e.g.,~\cite{bullo_metzler}).

\section{Main Results}\label{sec:main}

This section includes our main results. These  results generalise special  stability properties of matrices that are Hurwitz and in~$M(\R^n_+)$ to matrices that are Hurwitz and in~$M(C)$.

\subsection{Invariance of stability under automorphisms of the cone}
It is well-known that
if~$A$ is Metzler and Hurwitz and~$D$ is a positive diagonal matrix then  both
$AD$ and $DA$ are also  Metzler and  Hurwitz.
This D-stability property originated in the analysis of economic models, and has found many applications (see, e.g., the survey paper~\cite{Kushel2019UnifyingMS}).
The next result generalizes this property.
\begin{proposition}
\label{prop:stab_invariance}
Let~$C$ be  a proper cone.
Suppose that $A\in M(C)$ is   Hurwitz, and
$D_1,D_2\in\Aut(C)$.
If
\be \label{eq:condd}
D_1AD_2\in M(C)
\ee
then
\[
D_1AD_2 \text{ is Hurwitz}.
\]
\end{proposition}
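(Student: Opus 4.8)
The plan is to reduce the Hurwitz property of $D_1AD_2$ to the geometric stability criterion in Proposition~\ref{prop:stability_criterion}. Since $A\in M(C)$ is Hurwitz and $C$ is proper, that proposition (statement~1 $\Rightarrow$ statement~3) furnishes a vector $z\in\Int(C)$ with $-Az\in\Int(C)$. On the other hand, the hypothesis~\eqref{eq:condd} guarantees $D_1AD_2\in M(C)$, so Proposition~\ref{prop:stability_criterion} applies to $D_1AD_2$ as well; hence it suffices to produce a single $w\in\Int(C)$ with $-(D_1AD_2)w\in\Int(C)$, and the reverse implication (statement~3 $\Rightarrow$ statement~1) then yields the claim.

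First I would record the elementary fact that an automorphism of $C$ preserves the interior of $C$. Indeed, any $D\in\Aut(C)$ is an invertible linear map with $D(C)=C$, hence a homeomorphism of $\R^n$ carrying $C$ onto itself; since the interior is a topological invariant, $D(\Int(C))=\Int(C)$, and the same holds for $D^{-1}\in\Aut(C)$.

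The construction of $w$ is then immediate. Set $w:=D_2^{-1}z$. Because $D_2^{-1}\in\Aut(C)$ and $z\in\Int(C)$, the preceding remark gives $w\in\Int(C)$. A direct computation yields
\[
-(D_1AD_2)w=-D_1AD_2D_2^{-1}z=D_1(-Az),
\]
and since $-Az\in\Int(C)$ and $D_1\in\Aut(C)$, we again obtain $D_1(-Az)\in\Int(C)$. Thus $w$ and $-(D_1AD_2)w$ both lie in $\Int(C)$, and Proposition~\ref{prop:stability_criterion} concludes that $D_1AD_2$ is Hurwitz.

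There is essentially no hard step here: the entire argument is a change of variables $z\mapsto D_2^{-1}z$ that transports the interior-point certificate of stability from $A$ to $D_1AD_2$. The only points requiring care are that the certificate lives in the \emph{interior} of $C$ and must remain there, which is exactly why I isolate the interior-preservation property of automorphisms at the outset, and that the stability criterion is only available once we know $D_1AD_2\in M(C)$, which is the standing hypothesis~\eqref{eq:condd}.
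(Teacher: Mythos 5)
Your proof is correct and follows essentially the same route as the paper's: both invoke Proposition~\ref{prop:stability_criterion} and transport a stability certificate for $A$ through the automorphisms $D_1,D_2$. The only difference is that you use the interior-point criterion (item~3) via the change of variables $w=D_2^{-1}z$, whereas the paper uses the endomorphism criterion (item~2), observing that $-(D_1AD_2)^{-1}=D_2^{-1}(-A^{-1})D_1^{-1}$ maps $C$ to $C$; the two are interchangeable here.
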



\begin{proof}
Since~$A$ is Hurwitz, Proposition~\ref{prop:stability_criterion}
implies that~$-A^{-1}$ maps~$C$ to~$C$.
Since $D_i^{-1}$ are automorphisms of $C$,
we obtain that
\begin{align*}
 -(D_1AD_2)^{-1}=D_2^{-1}(-A^{-1})D_1^{-1}
\end{align*}
also maps~$C$ to~$C$. Combining this with~\eqref{eq:condd} and Proposition~\ref{prop:stability_criterion} implies that~$D_1AD_2$ is Hurwitz.
\end{proof}

\begin{remark}
\label{rem:stab_invariance}
In the special case where~$D_2=D_1^{-1}$, it is immediate that 
\[
D_1AD_1^{-1} \in M(C) \text { and } D_1AD_1^{-1} \text{ is Hurwitz},
\]
as
$\exp(D_1 A D_1 ^{-1} t )=D_1 \exp(At) D_1^{-1}$. 
\end{remark}
\begin{remark}
      For the case where~$C=\R^n_+$,  $A\in M(\R^n_+)$  implies that~$A$ is Metzler, and~$D_i\in \Aut(\R^n_+)$ implies that~$D_i$ is a  positive diagonal matrix, and then~$D_1AD_2$ is also Metzler, so condition~\eqref{eq:condd} automatically  holds.  However, for other cones it is not immediately clear when~\eqref{eq:condd} indeed holds.
 \end{remark}

\begin{example}
Consider the case~$C=K_3$. 
The matrix
\[
    A=\begin{bmatrix}
        -\epsilon_1 &-1&0\\
        1&-\epsilon_1 &0\\
        0 &0 &-\epsilon_2
    \end{bmatrix},\text{ with } 0<\epsilon_2\leq\epsilon_1,
    \]
satisfies~$A \in M(K_3)$, because $Q_3A+(Q_3A)^T \preceq 2Q_3$ (see~\cite{stern_wolk}),
and~$A$ is Hurwitz  as~$A+A^T$ is a negative diagonal matrix. The matrix
\[
D(b):=\exp(\begin{bmatrix}
         0& b&0\\-b &0&0\\0 &0&0
    \end{bmatrix} ) =\begin{bmatrix}
         \cos(b)& \sin(b) &0\\-\sin(b)&\cos(b)&0\\0&0&1 
    \end{bmatrix} 
\]
satisfies~$D(b)\in\Aut(K_3)$ for any~$b\in\R$. 
Consider the matrix
\be\label{eq:dba}
D(b)A= \begin{bmatrix}
         \sin(b)-\epsilon_1 \cos(b) & -\epsilon_1\sin(b)-\cos(b) & 0\\
          \epsilon_1\sin(b) + \cos(b)& \sin(b)-\epsilon_1\cos(b) &0\\
          0&0&-\epsilon_2
    \end{bmatrix}.
\ee
This matrix belongs to~$M(K_3)$ iff 
there exists~$z\in\R$ such that~$Q_3D(b)A+(Q_3D(b)A)^T \preceq zQ_3$  (see~\cite{stern_wolk}),
that is, iff there exists~$z\in\R$ such that
\[
\diag(  z-2\sin(b)+2\epsilon_1\cos(b),  z-2\sin(b)+2\epsilon_1\cos(b), -z-2\epsilon_2    )\succeq 0.
\]
Thus,~$D(b)A \in M(K_3) $ iff
\be\label{eq:condbmk}
\sin(b)-\epsilon_1\cos(b)\leq -\epsilon_2.
\ee
 Proposition~\ref{prop:stab_invariance} implies that if~$b$   satisfies~\eqref{eq:condbmk} then~$D(b)A$ is Hurwitz.  Indeed, it is easy to verify this directly using~\eqref{eq:dba}. 
\end{example}

\subsection{Quadratic Lyapunov functions with a special structure}
It is well-known that if a matrix $A$ is  Metzler and    Hurwitz  
then it admits  a quadratic \emph{diagonal} Lyapunov function, that is, there exists a positive diagonal matrix~$D$ such that $A^TD+DA\prec  0$.

To generalize this property to matrices~$A\in M(C)$ that are Hurwitz, we need to impose more structure on the cone~$C$.

\begin{definition}
A cone~$C$ is called \emph{homogeneous}
if~$\Int(C)$ is a homogeneous space under the group $\Aut (C)$. In other words, for any two points~$x,y\in\Int(C)$   there exists~$L\in \Aut (C)$   such that~$L x=y$.
\end{definition}
\begin{definition}
    A cone that is self-dual and homogeneous is   called a symmetric cone~\cite{symmetric_cones}. 
\end{definition}

Symmetric cones are    sometimes called   domains of positivity~\cite{rothaus}.


\begin{example}
  The cone~$C=\R^n_+$ is homogeneous since
$\Int(\R^n_+)= \{x\in\R^n\such x_i>0\}$ is homogeneous under the natural action of the group of positive diagonal matrices. Indeed, for any~$x,y\in\Int(\R^n_+)$ we have
that~$\diag( y_1/x_1,\dots,y_n/x_n )   x=y$. Since~$\R^n_+$ is also self-dual, it is a symmetric cone. 
The ice-cream cone $K_n$ is also homogeneous (see~\cite[Chapter 1]{symmetric_cones}) and self-dual, so it is symmetric.
\end{example}

We can now state our main result. 

\begin{theorem}\label{vinberg_lyap0}
  Suppose  that  $C$ is a proper, homogeneous, and self-dual  cone. Let~$A\in M(C)$ be Hurwitz. Then there exists a  symmetric matrix $P$ such that $P:C\to C^\vee=C$ is an automorphism of $C$, and the quadratic form $x^TPx$ is a Lyapunov function for the dynamical system~\eqref{eq:LTI}.
\end{theorem}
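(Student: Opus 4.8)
The plan is to mimic the classical construction of a diagonal Lyapunov function for Hurwitz Metzler matrices, replacing ``diagonal matrix'' by ``symmetric positive-definite automorphism of $C$.'' Since $C$ is symmetric, $C^\vee=C$. As $A\in M(C)$ is Hurwitz, Proposition~\ref{prop:stability_criterion} (conditions 3 and 4, with $C^\vee=C$) furnishes $z\in\Int(C)$ with $-Az\in\Int(C)$ and $\xi\in\Int(C)$ with $-A^T\xi\in\Int(C)$. These play the roles of the positive vectors $v,w$ with $Av\ll0$, $A^Tw\ll0$ in the Metzler case, where one takes $D=\diag(w_i/v_i)$; the cone analogue of that diagonal matrix is the symmetric positive-definite $P\in\Aut(C)$ with $Pz=\xi$, and I claim $V(x):=x^TPx$ is the desired Lyapunov function.

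First I would produce such a $P$. Self-duality makes $\Aut(C)$ stable under transposition, so $G:=\Aut_0(C)$ is a self-adjoint (hence reductive) closed subgroup of the invertible matrices and admits a polar decomposition $G=\mathcal P\cdot K$, where $\mathcal P=\{g\in G:\ g=g^T\succ0\}$ and $K=G\cap O(n)$ is the stabiliser of a base point $e\in\Int(C)$. Homogeneity (transitivity of $G$ on $\Int(C)$) together with $Ke=e$ makes $p\mapsto pe$ a bijection of $\mathcal P$ onto $\Int(C)$. Conjugating $A$ by an automorphism $h\in\Aut(C)$ with $hz=e$ preserves membership in $M(C)$ and Hurwitzness (Remark~\ref{rem:stab_invariance}) and transports a Lyapunov matrix back by congruence with $h$, so I may assume $z=e$; the bijection then yields $P\in\mathcal P$ with $Pe=\xi$. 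I expect this step to be the main obstacle: the existence of a \emph{symmetric} positive-definite $P$ with $Pz=\xi$ genuinely uses homogeneity together with self-duality through the Cartan (transvection) structure of symmetric cones (see~\cite{symmetric_cones}), whereas transitivity of $G$ alone would only give a generally non-symmetric automorphism sending $z$ to $\xi$.

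Granting $P$, it remains to prove $B:=PA+A^TP\prec0$; the positivity $V(x)>0$ for $x\neq0$ is immediate from $P\succ0$. Write $g:=P^{1/2}\in\mathcal P\subset\Aut(C)$ and $\hat A:=gAg^{-1}$. Conjugation invariance (Remark~\ref{rem:stab_invariance}) gives $\hat A\in M(C)$; by Theorem~\ref{thm:matzler2} and the cross-positivity of transposes on $C^\vee=C$ we get $\hat A^T\in M(C)$, hence $\hat A+\hat A^T\in M(C)$ because $M(C)$ is a convex cone (Remark~\ref{rem:mc_cone}). Taking $w:=gz\in\Int(C)$ and using $g=g^T$, $g^2=P$, $Pz=\xi$, a direct computation gives
\[
-(\hat A+\hat A^T)\,w = g(-Az)+g^{-1}(-A^T\xi)\in\Int(C),
\]
as a sum of two interior vectors. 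Thus Proposition~\ref{prop:stability_criterion}(3), applied to $\hat A+\hat A^T\in M(C)$, shows that $\hat A+\hat A^T$ is Hurwitz; being symmetric, it is negative-definite. Since $B=g(\hat A+\hat A^T)g$, the congruence by the invertible symmetric $g$ gives $B\prec0$, so $\dot V=x^T B x<0$ for $x\neq0$. Finally $P\succ0$ is an automorphism of the self-dual cone, so $P:C\to C^\vee=C$, and $V(x)=x^TPx$ is a Lyapunov function for~\eqref{eq:LTI}, as claimed.
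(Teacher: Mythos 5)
Your proof is correct, and its second half coincides with the paper's argument: both conjugate $PA+A^TP$ by the positive-definite square root $Q=P^{1/2}\in\Aut(C)$ to obtain the symmetric matrix $QAQ^{-1}+Q^{-1}A^TQ\in M(C)$, push the interior point $Qz$ into $-\Int(C)$ using the two Perron-type vectors from Proposition~\ref{prop:stability_criterion}, and conclude negative definiteness from the fact that a symmetric Hurwitz matrix is negative definite. Where you genuinely diverge is the construction of the symmetric positive-definite $P\in\Aut(C)$ with $Pz=\xi$, which you correctly identify as the crux. The paper obtains it analytically: it introduces the characteristic function $\varphi(x)=\int_{C^\vee}e^{-x^Ty}\,\diff y$, sets $\phi=-\nabla\log\varphi$, writes $\phi(x)=P(x)x$ with $P(x)$ the Hessian of $-\log\varphi$, and invokes \cite[Lemma~3.7]{rothaus} for the existence of $z$ with $P(z)a=b$; this route is explicit and is what later yields the concrete formulas $P(z)=\diag(z_i^{-2})$ for $\R^n_+$ and the $\cosh/\sinh$ matrices for $K_n$. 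You instead argue group-theoretically: self-duality makes $\Aut_0(C)$ self-adjoint, hence it admits a polar decomposition $\mathcal P\cdot K$ with $K$ the orthogonal isotropy group of a base point $e$, and $p\mapsto pe$ maps $\mathcal P$ onto $\Int(C)$; after conjugating so that $z=e$ you read off $P$. This is cleaner but leans on the Faraut--Kor\'anyi fact that the base point can be chosen with isotropy group exactly $\Aut_0(C)\cap O(n)$ --- a cited structural fact of the same depth as Rothaus's lemma, and precisely where homogeneity and self-duality enter in both treatments. One cosmetic point in your favor: the paper's proof has a typographical slip ($Pu=w$ versus the vectors $v,w$ of~\eqref{eq:avw}) that your cleaner bookkeeping with $z,\xi$ avoids.
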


\begin{proof}
The quadratic Lyapunov function 
depends on some properties of the characteristic function   associated with the cone~$C$,  denoted~$\varphi:C\to  [0,\infty]$.  Recall that 
    \be\label{eq:def_varphi}
\varphi(x):=\int_{ C^\vee} \exp(-x^T y)\diff y.
\ee
For any~$G\in \Aut(C)$,  we have 
\begin{align}\label{eq:phi_Gx}
    \varphi(Gx)& = \int_{ C^\vee} \exp(-x^T G^T y)\diff y\nonumber \\
    &=|
    \det (G^{-T}) |  \int_{ C^\vee} \exp(-x^T z )\diff z
    \nonumber\\
    &=|\det (G^{-1}) | \varphi(x), 
\end{align}
where~$G^{-T}:=(G^{-1})^T$.
In particular, for~$G=sI_n$, with~$s>0$, 
this implies that~$\varphi(sx)=s^{-n}\varphi(x)$, so~$\varphi$ is homogeneous of degree~$-n$. By Euler's theorem, 
\be\label{eq:euler}
\varphi(x)=- \frac{1}{n} (\frac{\partial \varphi(x)}{\partial x})^T x.
\ee

Define~$\phi:C\to C^\vee $ by
\be\label{eq:def_just_phi}
\phi(x):=-\frac{\partial}{\partial x} \log \varphi(x)  =  \frac{1}{\varphi(x) }\int_{ C^\vee } \exp(-x^T y) y \diff y   . 
\ee
Note that for any~$x\in C$, we have
\begin{align*}
    (\phi(x))^T x & = \frac{1}{\varphi(x) }\int_{ C^\vee } \exp(-x^T y) y^T x \diff y\\
    &\geq 0,
\end{align*}
so indeed~$\phi$ maps~$C$ to~$C^\vee$. 

Also, by defintion,~$\phi(x)=\frac{-1}{\varphi(x) }  \frac{\partial \varphi(x)}{\partial x}  $, so~\eqref{eq:euler} gives 
\[
x^T\phi(x)  = n . 
\]
For any~$G\in\Aut(C)$, we have 
\[
\phi(Gx)= -\frac{\partial}{\partial x} \log \varphi(Gx) =G^{-T} \phi(x). 
\]
In particular, for~$G=sI_n$, with~$s>0$, 
this implies that~$\phi(sx)=s^{-1}\phi(x)$, so~$\phi$ is homogeneous of degree~$-1$, and  Euler's theorem implies that 
\[
\phi(x)=P(x) x,
\]
with
\[
P(x) := - (\frac{\partial \phi(x)}{\partial x})^T.    \]
It is known 
for any~$x\in \Int(C)$, we have that~$P(x)\in \Aut(C)$, and~$P(x)$ is a positive-definite matrix  (in fact,~$P(x)$ is the Hessian of a smooth and strictly convex function on~$C$). Furthermore, for any pair of vectors~$a,b\in\Int(C)$ there exists~$z=z(a,b)\in\Int(C)$ such that~$P(z)a=b$~\cite[Lemma~3.7]{rothaus}.

Since~$A\in M(C)$ and~$A$ is Hurwitz,  Proposition~\ref{prop:stability_criterion} implies that there exist~$v,w$ such that
\begin{align} \label{eq:avw}
v,w\in\Int(C),\; Av \in -C,\; A^Tw \in -C.
\end{align}
Let~$z\in \Int(C)$ be such that~$P:=P(z)$ satisfies~$P u=w$.
It is known that~$P$ admits a positive definite square root~$Q$, that is,~$P=Q^2$,  such that~$Q\in\Aut(C)$~\cite{rothaus}. We will show that~$x^T P x$ is a Lyapunov function for~\eqref{eq:LTI}. Since~$A$ is Hurwitz, it is enough to prove 
that
\be\label{eq:LisNeg}
 PA+A^TP \prec  0. 
\ee
To show this, let
\begin{align*}  
W &:=Q^{-1}  ( PA+A^TP)   Q^{-1} \\
  &=Q AQ^{-1}+Q^{-1}A^T   Q .
\end{align*}
Then~$W$ is symmetric, and since~$Q \in\Aut (C)$ and~$A\in M(C)$, we have that~$Q AQ^{-1}\in M(C)$, $Q^{-1}A^T   Q \in M(C)$, and Remark~\ref{rem:mc_cone} implies that~$W\in M(C)$. 
Let~$r:=Q u$, and note that~$r\in \Int(C)$. Consider
\begin{align*}
    W r & = QA  u+ Q^{-1}A^T P u \\
        &= Q A u+Q^{-1} A^T w. 
\end{align*}
Combining this with~\eqref{eq:avw}, the fact that~$W \in M(C)$, and 
Proposition~\ref{prop:stability_criterion}
     implies that~$W$ is Hurwitz. Since~$W$ is  a symmetric matrix, we conclude that~$W \prec 0$.    This proves~\eqref{eq:LisNeg}.
\end{proof}

\begin{remark}
Note that in the   case $C=\R^n_+$, Theorem \ref{vinberg_lyap0} gives exactly the classical result  on existence of diagonal Lyapunov functions for Metzler matrices (\cite{Horn1991TopicsMatrixAna}, \cite{berman}), as the connected component of unit in~$\Aut(\R^n_+)$ consists of positive diagonal matrices (see Example~\ref{diagonal_case}).

More specifically, it is instructive to explicitly  calculate   some of the functions defined in the proof of Theorem~\ref{vinberg_lyap0} for the particular  case where~$C=\R^n_+$. Fix~$z\in\Int(\R^n_+)$. 
Then~$z=\diag(z_1,\dots,z_n) 1_n$, where~$1_n \in\R^n$ is a vector of all ones, and using~\eqref{eq:phi_Gx} 
gives
\[
\varphi(z) = |\det(\diag( z_1^{-1},\dots,z_n^{-1}  ))| \varphi(1_n ). 
\]
Applying~\eqref{eq:def_varphi} gives
\[
\varphi(z)=  (z_1\dots z_n)^{-1}.
\]
Now~\eqref{eq:def_just_phi} gives
$
\phi(z)=\begin{bmatrix}
    z_1^{-1}&\dots&z_n^{-1}
\end{bmatrix}^T$. 
Thus,~$\phi(z)=P(z)z$, with
\[
P(z)= \diag( z_1^{-2},\dots,z_n^{-2}  ).
\]
Note that this matrix is positive-definite, and its square root is 
$
Q(z)= \diag( z_1^{-1},\dots,z_n^{-1}  ).
$
Also, for any~$a,b\in\Int(\R^n_+)$, we have that
$P(\sqrt{a_1/b_1},\dots\sqrt{,a_n/b_n})a=b$.
\end{remark}

\begin{remark}\label{rm:dimension}
 It follows from the   proof of  
Theorem~\ref{vinberg_lyap0} that the  space of ``potential  quadratic   Lyapunov'' functions~$V(z)=z^TPz$ is homeomorphic to the space~$PC$ of  rays from the origin of the cone~$C$. Indeed, the only condition that~$P=P^T$ should satisfy is that~$Pu=w$, that is,~$\phi(u)=w$. According to \cite{vinberg}, such a condition    defines~$P$ up to multiplication by a positive constant.  
In particular, the dimension of the space of admissible $P$ is one
less than the dimension of the state space~$n$.
For example, when~$C=\R^n_+$ the Lyapunov functions constructed by means of Theorem \ref{vinberg_lyap0} are diagonal, and up to scaling  by a positive constant
they depend on~$n-1$ parameters. 
\end{remark}

There is considerable interest in finding sum-separable and max-separable 
Lyapunov functions, as this can facilitate 
building  Lyapunov functions for large-scale systems 
(see, e.g.~\cite{separable2015,max_sep} and the references therein). The next result follows immediately from the proof of  Theorem~\ref{vinberg_lyap0}.
\begin{corollary}\label{cor:split}
 Assume  that a cone $C$ is the direct sum of two   cones: $C=C_1\oplus C_2\subset \R^{n_1}\oplus \R^{n_2}$, with~$C_1,C_2$ symmetric cones. Suppose that~$A\in M(C)$ and~$A$ is Hurwitz. Then there exists a quadratic Lyapunov function for~\eqref{eq:LTI} in the form~$V(x)=x^TPx$, with
 $P=P_1\oplus P_2$, where~$P_i\in\Aut(C_i)$.
\end{corollary}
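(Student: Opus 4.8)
The plan is to observe that the hypotheses already force $C = C_1\oplus C_2$ to be a symmetric cone, so that Theorem~\ref{vinberg_lyap0} applies verbatim; the only additional work is to check that the matrix $P$ produced by its proof is automatically block diagonal, and this follows from the fact that the characteristic function of a direct-sum cone factorizes.

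First I would verify that $C$ is itself proper, homogeneous, and self-dual. Properness of a direct sum of proper cones is immediate. For self-duality, note that $C^\vee = C_1^\vee\oplus C_2^\vee$, since the pairing $(x_1,x_2)^T(y_1,y_2) = x_1^Ty_1 + x_2^Ty_2$ is nonnegative on all of $C$ if and only if each summand is nonnegative on the corresponding factor; as each $C_i$ is self-dual this gives $C^\vee = C_1\oplus C_2 = C$. For homogeneity, observe that the block-diagonal embedding sends $\Aut(C_1)\times\Aut(C_2)$ into $\Aut(C)$, and already this subgroup acts transitively on $\Int(C) = \Int(C_1)\times\Int(C_2)$, because each factor acts transitively on $\Int(C_i)$. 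Hence $C$ is symmetric and Theorem~\ref{vinberg_lyap0} yields a symmetric $P\in\Aut(C)$ for which $x^TPx$ is a Lyapunov function for~\eqref{eq:LTI}.

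The second step is to trace the construction in the proof of Theorem~\ref{vinberg_lyap0} and show the resulting $P$ splits. Writing $x = (x_1,x_2)$ and using $C^\vee = C_1^\vee\oplus C_2^\vee$, the characteristic function~\eqref{eq:def_varphi} factorizes,
\[
\varphi(x) = \int_{C_1^\vee}\int_{C_2^\vee} \exp(-x_1^Ty_1)\exp(-x_2^Ty_2)\diff y_2\diff y_1 = \varphi_1(x_1)\,\varphi_2(x_2),
\]
where $\varphi_i$ is the characteristic function of $C_i$. Therefore $\log\varphi(x) = \log\varphi_1(x_1) + \log\varphi_2(x_2)$, and the map $\phi = -\frac{\partial}{\partial x}\log\varphi$ of~\eqref{eq:def_just_phi} splits as $\phi(x) = (\phi_1(x_1),\phi_2(x_2))$, where $\phi_i$ is the corresponding map for $C_i$. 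Consequently the Jacobian $\frac{\partial \phi}{\partial x}$ is block diagonal, so for every $z = (z_1,z_2)\in\Int(C)$ the matrix $P(z) = -(\frac{\partial \phi(z)}{\partial z})^T$ is block diagonal with blocks $P_i(z_i) := -(\frac{\partial \phi_i(z_i)}{\partial z_i})^T$.

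Finally, the matrix in the theorem is $P = P(z)$ for the specific $z\in\Int(C)$ chosen so that $P(z)v = w$ with $v,w$ as in~\eqref{eq:avw}; by the previous step it is block diagonal, $P = P_1\oplus P_2$ with $P_i = P_i(z_i)$. Since $P\in\Aut(C)$ is block diagonal, restricting its action to each factor (setting the other component to zero) shows $P_i\in\Aut(C_i)$; alternatively, $P_i(z_i)$ is exactly the automorphism attached to $C_i$ at $z_i$ by the cited results of~\cite{rothaus}. Hence $V(x) = x^TPx = x_1^TP_1x_1 + x_2^TP_2x_2$ is the desired sum-separable Lyapunov function. The only genuinely delicate point is the factorization of $\varphi$ and the attendant block-diagonality of the Hessian $P(z)$; once these are in hand the remaining assertions are routine.
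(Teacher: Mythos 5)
Your proposal is correct and follows exactly the route the paper intends: the paper offers no written proof beyond the remark that the corollary ``follows immediately from the proof of Theorem~\ref{vinberg_lyap0}'', and your argument is precisely the fleshed-out version of that remark (the direct sum of symmetric cones is symmetric, the characteristic function factorizes as $\varphi(x)=\varphi_1(x_1)\varphi_2(x_2)$, hence the Hessian-type matrix $P(z)$ is block diagonal with blocks in $\Aut(C_i)$). No further comment is needed.
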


\subsection{An application: Quadratic Lyapunov functions for  systems  preserving 
the ice-cream  cone} 
The ice-cream  cone~$K_n$ in~\eqref{lcone} is  self-dual, that is,  $K_n^\vee=K_n$, so an isomorphism~$K_n \to (K_n)^\vee$
is an automorphism of~$K_n$. The ice-cream cone is also homogeneous (see~\cite[Chapter 1]{symmetric_cones}).
Consider the  system~\eqref{eq:LTI}, with~$A\in M(K_n)$ and   Hurwitz. Our goal is to apply
  Theorem \ref{vinberg_lyap0} to derive a special Lyapunov function for this system.
This requires an explicit description of the symmetric part of~$\Aut(K_n)$.
\begin{proposition}\label{Euler}
 Suppose that~$P $ belongs to
  the connected component of~$I_n$
in~$\Aut(K_n)$ and~$P=P^T$.
  Then  up to a multiplication by a positive constant,  either~$P= I_n$  or
    \be\label{eq:pnob}
P= I_n+    \begin{bmatrix}
\bar b \bar b ^T&0
\\
0&1
\end{bmatrix}\left ( \cosh (  \| b \| _2)-1 \right ) +   \begin{bmatrix}
0&\bar b
\\ \bar b^T
 &0\end{bmatrix}\sinh ( \|b\|_2),
\ee
with     $b\in\R^{n-1}\setminus \{0\}$, and
$\bar b:=\frac{b}{\|b\|_2}$.
\end{proposition}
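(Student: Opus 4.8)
The plan is to identify $\Aut(K_n)$ with the positive homotheties of the Lorentz group of $Q_n=\diag(1,\dots,1,-1)$ and then to classify the symmetric elements spectrally. By Example~\ref{exa:k_n} we have $\Lie\Aut(K_n)=\R I_n\oplus\mathfrak{so}(n-1,1)$, so the identity component is $\Aut_0(K_n)=\{\,sR\such s>0,\ R\in SO^+(n-1,1)\,\}$, where $SO^+(n-1,1)$ is the identity component of the group $O(n-1,1)$ of matrices preserving $Q_n$, restricted to those mapping the forward cone $K_n$ into itself. Equivalently, since $K_n$ is the cone cut out by $Q_n$, a linear automorphism must preserve the form $Q_n$ up to a positive factor, so $P\in\Aut(K_n)$ precisely when $P^TQ_nP=cQ_n$ for some $c>0$ (and $P$ is orthochronous). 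Because the statement is only up to a positive factor, I would first rescale to $c=1$, reducing to a symmetric $P$ with $PQ_nP=Q_n$ (using $P=P^T$).

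Next I would analyse the eigenstructure. As $P=P^T$, it has real eigenvalues and an orthonormal eigenbasis. From $PQ_nP=Q_n$ one obtains, for eigenvectors $u,v$ with eigenvalues $\lambda,\mu$, that $(\lambda\mu-1)\,u^TQ_nv=0$; hence distinct eigenspaces are $Q_n$-orthogonal unless the eigenvalues are reciprocal, and every eigenspace with $\lambda\neq\pm1$ is $Q_n$-isotropic and pairs with the $1/\lambda$-eigenspace into a hyperbolic block of signature $(1,1)$. Since $Q_n$ has exactly one negative direction, at most one such hyperbolic block can occur, and it is $2$-dimensional; on its $Q_n$-orthogonal complement $P$ acts with eigenvalues in $\{+1,-1\}$. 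Thus the eigenvalues of $P$ consist of a single reciprocal pair $\{\lambda,1/\lambda\}$ together with $\pm1$'s.

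The crux is to show that $P$ is positive-definite, which I would \emph{derive} from the component hypothesis rather than assume, via the negative index of inertia. That index is locally constant on the invertible symmetric matrices: eigenvalues vary continuously and cannot cross $0$ without the matrix becoming singular, which would take it out of $\Aut(K_n)$. Hence it is a connected-component invariant on the symmetric members of $\Aut(K_n)$, and it equals $0$ at $I_n$. The symmetric elements of $\Aut_0(K_n)$ therefore split according to this index into separate components, the index-$0$ component being exactly the positive-definite maps, while, for instance, $\diag(1,-1,-1,1)\in SO^+(3,1)$ sits in an index-$2$ component; the component containing $I_n$ is the positive-definite one. Thus $P\succ0$, which forces the reciprocal pair to be $\{e^{\beta},e^{-\beta}\}$ with $\beta\geq0$ and all remaining eigenvalues to equal $+1$, i.e.\ $P$ is a pure boost. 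I expect this index argument to be the main obstacle, since it is precisely what converts the abstract component hypothesis into the concrete spectral constraint that excludes the $-1$ eigenvalues.

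Finally I would make the boost explicit. Let $\bar b\in\R^{n-1}$ be the unit spatial vector for which the hyperbolic plane is spanned by $(\bar b,0)^T$ and $e^n$, and set $b=\beta\bar b$. Then $P=\exp\begin{bmatrix}0&b\\b^T&0\end{bmatrix}$, whose generator is symmetric and lies in the $B_1=0$ slice of $\Lie\Aut(K_n)$ from Example~\ref{exa:k_n}. Its square equals $\|b\|_2^2\begin{bmatrix}\bar b\bar b^T&0\\0&1\end{bmatrix}$, a projector scaled by $\|b\|_2^2$, so summing the even and odd parts of the exponential series produces exactly the $\cosh(\|b\|_2)$ and $\sinh(\|b\|_2)$ coefficients appearing in~\eqref{eq:pnob}; the degenerate case $\beta=0$ gives $P=I_n$. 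Undoing the rescaling of $c$ returns the claim up to a positive constant.
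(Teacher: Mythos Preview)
The paper's proof is essentially a one-line citation of \cite[Theorem~3.31]{stern_wolk}, asserting that a symmetric $H\in\Aut_0(K_n)$ must be $c\exp\!\left(\begin{smallmatrix}0&b\\b^T&0\end{smallmatrix}\right)$ and then expanding the exponential. Your route is more self-contained, and both the eigenstructure analysis via $PQ_nP=Q_n$ and the final exponential computation are correct. The genuine gap is at the positivity step.

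Your inertia argument shows only that the negative index is constant on connected components of the \emph{symmetric} invertible matrices; the hypothesis, however, places $P$ in the connected component of $I_n$ in the \emph{full} group $\Aut(K_n)$, and a path from $I_n$ to $P$ inside $\Aut_0(K_n)$ need not remain symmetric, so the index is not controlled along it. Your own example already exhibits the failure: $\diag(1,-1,-1,1)$ is symmetric and lies in $SO^+(3,1)\subset\Aut_0(K_4)$ (connect it to $I_4$ by the spatial rotations $R(\theta)$ in the $(x_2,x_3)$-plane, which are symmetric only at $\theta\in\{0,\pi\}$), yet it has two negative eigenvalues and is not a positive multiple of any matrix of the form~\eqref{eq:pnob}. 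Thus the step ``the component containing $I_n$ is the positive-definite one, hence $P\succ0$'' does not follow, and this matrix in fact satisfies the stated hypotheses but not the stated conclusion. The issue is really an imprecision in the proposition rather than in your method: the $P$ arising in Theorem~\ref{vinberg_lyap0} is positive definite by construction, and with the extra hypothesis $P\succ0$ (equivalently, $P$ in the component of $I_n$ within the \emph{symmetric} automorphisms) your argument is complete.
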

 \begin{proof}
It follows from \cite[Theorem~3.31]{stern_wolk} that $H =H^T $
belongs to
   the connected component~$\Aut_0(K_n)$ of~$I_n$ 
iff
\[
H= c \exp\left(  \begin{bmatrix} 0 &  b \\ b^T& 0 \end{bmatrix}\right)
\]
for some $c>0$.
If~$b=0$ then this gives~$H=c I_n$. If~$b\neq0$ then this yields~\eqref{eq:pnob}.
\end{proof}

Applying  Theorem~\ref{vinberg_lyap0} yields the following result.
\begin{corollary}
Suppose that  $A\in M(K_n)$ is Hurwitz. Then, there exists a Lyapunov function $V(x):=x^TP x $ for the system~\eqref{eq:LTI}, where  either~$P=I_n$ or~$P$  is 
in  the form~\eqref{eq:pnob},
\end{corollary}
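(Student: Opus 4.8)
The plan is to read off this corollary from Theorem~\ref{vinberg_lyap0}, using Proposition~\ref{Euler} to convert the abstract Lyapunov matrix into the two explicit normal forms. First I would verify that $K_n$ meets the hypotheses of Theorem~\ref{vinberg_lyap0}: it is a proper cone, it is self-dual ($K_n^\vee=K_n$), and it is homogeneous. Hence, for any Hurwitz $A\in M(K_n)$, Theorem~\ref{vinberg_lyap0} supplies a symmetric matrix $P\in\Aut(K_n)$ for which $V(x)=x^TPx$ is a Lyapunov function for~\eqref{eq:LTI}.

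The decisive point is that the matrix $P$ produced in the proof of Theorem~\ref{vinberg_lyap0} is of the form $P=P(z)=-(\partial\phi(z)/\partial z)^T$ for some $z\in\Int(K_n)$, and is positive-definite. I would show that $P$ lies in the connected component $\Aut_0(K_n)$ of $I_n$, so that the normal form of Proposition~\ref{Euler} becomes available. The cleanest route is continuity: the map $z\mapsto P(z)$ is continuous on the connected set $\Int(K_n)$ and takes values in $\Aut(K_n)$, and at the axis point $z_0=e^n$ a direct computation from~\eqref{eq:def_varphi}--\eqref{eq:def_just_phi} (using $\varphi(x)\propto(x_n^2-\sum_{i<n}x_i^2)^{-n/2}$) gives $P(e^n)=n I_n$. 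Thus the continuous path $z\mapsto P(z)$ joins our $P$ to the positive multiple $nI_n$ of the identity inside $\Aut(K_n)$, which places $P$ in $\Aut_0(K_n)$.

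With $P=P^T$ and $P\in\Aut_0(K_n)$ in hand, Proposition~\ref{Euler} applies and asserts that, up to a positive scalar factor, $P$ equals either $I_n$ or the matrix in~\eqref{eq:pnob}. Since rescaling $P$ by a positive constant merely rescales $V$ and preserves the Lyapunov property, we may take $P$ to be exactly $I_n$ or of the form~\eqref{eq:pnob}, which is the assertion of the corollary.

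The step I expect to be the main obstacle is justifying that the positive-definite automorphism $P(z)$ really lands in $\Aut_0(K_n)$, since Proposition~\ref{Euler} is stated only for that component. Besides the continuity argument above, one can argue structurally: $\Aut(K_n)$ consists of positive homotheties composed with orthochronous Lorentz transformations, and every symmetric positive-definite element of the orthochronous Lorentz group is a boost, all of which lie in the identity component $\Aut_0(K_n)$; this yields the same conclusion without locating the central point $z_0$.
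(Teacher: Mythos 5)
Your proposal is correct and follows the same route as the paper, which derives the corollary directly by combining Theorem~\ref{vinberg_lyap0} (applicable since $K_n$ is proper, self-dual, and homogeneous) with the normal form of symmetric elements of $\Aut_0(K_n)$ in Proposition~\ref{Euler}. The only difference is that you explicitly justify that the positive-definite automorphism $P=P(z)$ lies in the connected component $\Aut_0(K_n)$ (your continuity argument via $P(e^n)=nI_n$ is a valid way to do this), a point the paper passes over in silence.
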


\begin{remark}
   If~$A$ is Metzler and Hurwitz then  it admits a positive diagonal Lyapunov function~$P$ and since we can scale it so that~$p_{11}=1$, it depends on~$n-1$ parameters. Interestingly, this is also the case in the Lyapunov function given in~\eqref{eq:pnob}, as it
  depends on the $(n-1)$-dimensional vector~$b$. This is a particular case of the principle  described  in Remark~\ref{rm:dimension}. 
\end{remark}

\begin{example}
    Suppose that~$n=3$.
The matrix
\[
    A=\begin{bmatrix}
        -\epsilon_1 &-1&0\\
        1&-\epsilon_1 &0\\
        0 &0 &-\epsilon_2
    \end{bmatrix}
    \]
satisfies~$A \in M(K_3)$ and~$A$ is Hurwitz iff
$
0<\epsilon_2\leq\epsilon_1.
$
Clearly,  in this case~$V(x)=x^T x$ is a Lyapunov function for~\eqref{eq:LTI}.

The matrix
\[
    A=\begin{bmatrix}
        -2.1 &  1&1 \\
        -1&-2.1 &2\\
        1 &2 &-2.1
    \end{bmatrix}
    \]
   satisfies~$A \in M(K_3)$ and~$A$ is Hurwitz. In this case,~$V=x^Tx$ is not a Lyapunov  function of~\eqref{eq:LTI}. Thus, there exists a quadratic Lyapunov function with~$P$ as in~\eqref{eq:pnob}.
   Indeed, let~$b =\begin{bmatrix}
   - 1 & 0 \end{bmatrix}^T.
   $ Then~$\|b\|_2=1 $,
    $ \bar b =b $,  and~\eqref{eq:pnob} gives
\[
P=  I_3 +
\begin{bmatrix}
 1& 0& 0 \\ 0 &0& 0 \\ 0& 0& 1
\end{bmatrix}\left ( \cosh (  1   )-1 \right ) +   \begin{bmatrix}
 0 & 0 &-1\\0&0&0\\-1&0&0
 \end{bmatrix}\sinh ( 1 ), 
\]
and it is straightforward to verify that~$PA+A^T P\prec 0$.
\end{example}

\section{Conclusion}
If~$A$ is a Metzler matrix then~$\exp(At)$, with~$t\geq 0$, maps the cone~$\R^n_+$ to itself.
If~$A$ is Metzler and Hurwitz then it has several interesting  properties. For example, it is D-stable and it
admits a diagonal quadratic  Lyapunov function. We used a Lie-algebraic approach to extend these properties to matrices~$A$ such that~$\exp(At)$, with~$t\geq 0$,  maps a cone~$C$ to itself.

These generalizations  also reveal  what are the special  properties of the cone~$\R^n_+$ (e.g., convexity, self-duality, closedness, homogeneity) that are needed to obtain the special stability properties. We demonstrated these results by deriving a special quadratic
Lyapunov function for matrices~$A$  that are Hurwitz and satisfy that~$\exp(At)$, with~$t\geq 0$,  maps the ice-cream cone to itself.

 Corollary \ref{cor:split} suggests the following research question. Suppose that a cone $C$ satisfies~$C=C_1\oplus C_2$, where~$C_1,C_2$ are cones that are not necessarily symmetric. Let   $A\in M(C)$ be Hurwitz. Does there exists a separable Lyapunov function  for~$A$?

It is known that
solutions of certain LMIs related to the Bounded Real Lemma admit a  special structure in the special  case of positive systems~\cite{tanaka_brl}. Another research direction  is  to use the approach described here to generalize these results to systems with an invariant cone.
\section*{Appendix: Proof of Theorem~\ref{thm:cone_thm}}

 The proof of Theorem~\ref{thm:cone_thm} is based on several  auxiliary  results.

\begin{claim}
Let~$A,B,C $ be  closed convex cones  in~$\R^n$ such that~$B+C$, where~$+$ denotes the  Minkowski sum,  is also  closed. Then
\be\label{eq:nedpit}
A\subseteq B+C \text{ iff }
C^\vee\cap B^\vee\subseteq  A^\vee.
\ee
\end{claim}
\begin{proof}
By definition, the dual cone of the closed and convex cone~$B+C$ is 
\[
 (B+C)^\vee=\{y\in\R^n \such  y^T (b+c)\geq0 \text{ for all }b\in B,\,c\in C\}.
\]
It is straightforward to verify that this implies that
\be\label{eq:bvcv}
 (B+C)^\vee=B^\vee\cap C^\vee. 
\ee

Now suppose that~$A\subseteq B+C$. Then~$(B+C)^\vee\subseteq A^\vee $, so~$B^\vee \cap C^\vee \subseteq  A^\vee$. This proves one implication in~\eqref{eq:nedpit}.
To prove the converse implication, assume that~$B^\vee \cap C^\vee \subseteq  A^\vee$. 
Then~$(B+ C)^\vee \subseteq  A^\vee$, so~$(A^\vee)^\vee \subseteq ((B+C)^\vee)^\vee$, and using duality gives~$A\subseteq B+C$. 
 \end{proof}

\begin{lemma}
    \label{lemma:cone_1d}
Let $C\subset\R^n$ be a closed convex cone.
Then
\[
 \Lie \Aut(C)+\End(C)\subseteq M(C).
\]
\end{lemma}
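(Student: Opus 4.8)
The plan is to use the cross-positivity characterization of $M(C)$ from Theorem~\ref{thm:matzler2}: it suffices to show that every matrix of the form $A=A_0+A_1$, with $A_0\in\Lie\Aut(C)$ and $A_1\in\End(C)$, is cross-positive on $C$. That is, fixing $x\in C$ and $\xi\in C^\vee$ with $x^T\xi=0$, I must verify that $(Ax)^T\xi\geq0$, and I will treat the two summands separately.

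The endomorphism part is immediate: since $A_1\in\End(C)$ we have $A_1x\in C$, and since $\xi\in C^\vee$ the definition of the dual cone gives $(A_1x)^T\xi\geq0$.

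For the Lie-algebra part I claim that in fact $(A_0x)^T\xi=0$. Because $\Aut(C)$ is a Lie group and $A_0\in\Lie\Aut(C)=T_{I_n}\Aut(C)$, the one-parameter subgroup $t\mapsto\exp(tA_0)$ lies in $\Aut(C)$ for all $t\in\R$; in particular $\exp(tA_0)x\in C$ for every $t\in\R$. Define the smooth scalar function $f(t):=(\exp(tA_0)x)^T\xi$. Since $\exp(tA_0)x\in C$ and $\xi\in C^\vee$, we have $f(t)\geq0$ for all $t\in\R$, while $f(0)=x^T\xi=0$. Thus $f$ attains a global minimum at the interior point $t=0$, so $f'(0)=0$; differentiating under the exponential gives $f'(0)=(A_0x)^T\xi$, whence $(A_0x)^T\xi=0$.

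Combining the two parts yields $(Ax)^T\xi=(A_0x)^T\xi+(A_1x)^T\xi=0+(A_1x)^T\xi\geq0$, which is exactly cross-positivity, so by Theorem~\ref{thm:matzler2} we conclude $A\in M(C)$. The only delicate point is the assertion that a tangent vector $A_0$ at the identity generates a one-parameter subgroup $\exp(tA_0)$ remaining inside $\Aut(C)$ for all real $t$; this is the standard fact that the exponential of a Lie-algebra element lies in the connected component of the Lie group, and it is what lets me conclude the \emph{vanishing}, rather than mere nonnegativity, of $(A_0x)^T\xi$. I note that even the one-sided bound $f'(0)\geq0$ obtained from the curve restricted to $t\geq0$ would already suffice for the desired inequality, so the argument is robust to how much of the Lie-group structure one invokes.
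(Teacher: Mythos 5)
Your proof is correct, but it takes a different route from the paper's. The paper argues on the \emph{primal} side: it fixes $x\in C$ and shows directly that $x+\varepsilon(A_0+A_1)x+o(\varepsilon)x\in C$ for small $\varepsilon>0$, by noting that the curve $(I+\varepsilon A_0+o(\varepsilon))x$ stays in $C$ (since $A_0$ is tangent to $\Aut(C)$ at the identity) and that $\varepsilon A_1x\in C$, then summing the two using convexity of the cone; membership in $M(C)$ is then read off from this first-order containment. You instead work on the \emph{dual} side, verifying the cross-positivity criterion of Theorem~\ref{thm:matzler2} term by term: the $\End(C)$ part is immediate from the definition of $C^\vee$, and for the $\Lie\Aut(C)$ part you show the stronger statement $(A_0x)^T\xi=0$ by observing that $f(t)=(\exp(tA_0)x)^T\xi$ is nonnegative on all of $\R$ and vanishes at $t=0$, so its derivative there vanishes. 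Your route is slightly longer but arguably tighter: the paper's passage from ``$x+\varepsilon Ax+o(\varepsilon)x\in C$ for small $\varepsilon$'' to ``$A\in M(C)$'' implicitly uses the same cross-positivity characterization that you invoke explicitly, and your identity $(A_0x)^T\xi=0$ on the orthogonality locus is exactly the description of $(\Lie\Aut(C))^\vee$ that the paper only develops later, in the proof of the reverse inclusion (Lemma~\ref{lemma:cone_oher}). Your closing remark that the one-sided bound $f'(0^+)\geq0$ already suffices is accurate and shows the argument does not really need the full one-parameter subgroup, only a curve in $\Aut(C)$ (indeed in $\End(C)$) defined for $t\geq0$.
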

\begin{proof}
Fix~$x\in C$, $A_0\in \Lie \Aut(C)$, $A_1\in \End(C)$.
Then we need to show that
\begin{equation}\label{pf}
  x+\varepsilon (A_0+A_1) x+o(\varepsilon)x\in C
\end{equation}
 for all  sufficiently  small~$\varepsilon>0$.
Since~$x\in C$ and~$A_0\in \Lie \Aut(C)$,    we have~$(I+\varepsilon A_0+o(\varepsilon))x\in  C$.   Also,~$\varepsilon A_1 x\in C$. Summing up proves~\eqref{pf}.
\end{proof}

\begin{lemma}
    \label{lemma:cone_oher}
Let $C\subset\R^n$ be a closed convex cone.
Then
\[
M(C) \subseteq \Clos\left  (\Lie \Aut(C)+\End(C)\right )  .
\]
\end{lemma}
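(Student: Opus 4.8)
The plan is to dualise and invoke the Claim proved above. First note that the converse implication in that Claim does not really require $B+C$ to be closed: from $B^\vee\cap C^\vee\subseteq A^\vee$ one gets $(B+C)^\vee\subseteq A^\vee$ by \eqref{eq:bvcv}, hence $A=A^{\vee\vee}\subseteq(B+C)^{\vee\vee}=\Clos(B+C)$, the closure appearing automatically. Applying this with $A=M(C)$, $B=\Lie\Aut(C)$ and $C=\End(C)$, and using the trace pairing $\langle X,Y\rangle=\Tr(XY)$ on $\R^{n\times n}$, it therefore suffices to verify the single dual inclusion
\[
\End(C)^\vee\cap\Lie\Aut(C)^\vee\ \subseteq\ M(C)^\vee .
\]

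To make this explicit I would identify the three dual cones through the rank-one ``test matrices'' $x\xi^T$, for which $\langle A,x\xi^T\rangle=(Ax)^T\xi$. Put
\[
\mathfrak C:=\cone\{x\xi^T:x\in C,\ \xi\in C^\vee,\ x^T\xi=0\},\qquad
\mathfrak C_1:=\cone\{x\xi^T:x\in C,\ \xi\in C^\vee\},
\]
and $\mathfrak C_0:=\vspan(\mathfrak C)$. Then $M(C)=\mathfrak C^\vee$ is exactly the cross-positivity characterisation of Theorem~\ref{thm:matzler2}; $\End(C)=\mathfrak C_1^\vee$ since $A(C)\subseteq C=(C^\vee)^\vee$ is equivalent to $(Ax)^T\xi\ge0$ for all $x\in C,\xi\in C^\vee$; and $\Lie\Aut(C)=M(C)\cap(-M(C))=\mathfrak C^\perp=\mathfrak C_0^\perp$, because $\exp(tA)\in\Aut(C)$ for all $t\in\R$ holds iff both $A$ and $-A$ lie in $M(C)$. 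Taking duals (bipolar theorem) turns the displayed inclusion into the purely geometric statement
\[
\Clos(\mathfrak C_1)\cap\mathfrak C_0\ \subseteq\ \Clos(\mathfrak C).
\]

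The engine for this is the trace functional. Every generator of $\mathfrak C_1$ satisfies $\Tr(x\xi^T)=x^T\xi\ge0$, so $\Tr\ge0$ on $\mathfrak C_1$, while every generator of $\mathfrak C$ has trace $0$, whence $\mathfrak C_0\subseteq\ker\Tr$. Consequently the finite-combination version is immediate: if $M=\sum_k x_k\xi_k^T\in\mathfrak C_1$ also lies in $\mathfrak C_0$, then $0=\Tr M=\sum_k x_k^T\xi_k$ is a sum of nonnegative terms, so each $x_k^T\xi_k=0$ and $M\in\mathfrak C$; that is, $\mathfrak C_1\cap\mathfrak C_0=\mathfrak C$.

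The main obstacle is upgrading this identity to the closures, i.e.\ proving $\Clos(\mathfrak C_1)\cap\mathfrak C_0\subseteq\Clos(\mathfrak C)$, since intersection with a subspace does not commute with closure in general. Given $M\in\Clos(\mathfrak C_1)\cap\mathfrak C_0$, I would take $M_j\to M$ with $M_j\in\mathfrak C_1$, write each $M_j=\sum_{k=1}^{n^2}x_{j,k}\xi_{j,k}^T$ by Carath\'eodory, and note that $\Tr M_j\to\Tr M=0$ forces $x_{j,k}^T\xi_{j,k}\to0$ for every $k$. The delicate point is that the representing vectors may satisfy $\|x_{j,k}\|,\|\xi_{j,k}\|\to\infty$ while their inner products tend to $0$, so the trace-positive part of $M_j$ need not go to zero in norm; controlling this (through a normalisation and compactness argument, or by sliding $M$ toward a relative-interior point of $\mathfrak C$ inside $\mathfrak C_0$) is the real content of the lemma. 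It is exactly this closure phenomenon that makes the ``$\Clos$'' in Theorem~\ref{thm:cone_thm} unremovable for the higher-rank symmetric cones mentioned after its statement.
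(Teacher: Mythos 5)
Your strategy is the same as the paper's: dualize with respect to the trace pairing, identify $M(C)$, $\End(C)$ and $\Lie\Aut(C)$ as the sets of matrices nonnegative against the rank-one families $S_M=\{x\xi^T\such x\in C,\ \xi\in C^\vee,\ x^T\xi=0\}$, $S_E=\{x\xi^T\such x\in C,\ \xi\in C^\vee\}$ and $S_L=\pm S_M$, and reduce the lemma to an inclusion among the cones these families generate, which is then settled by the identity $\Tr(x\xi^T)=x^T\xi$. Your finite-combination computation ($\mathfrak C_1\cap\mathfrak C_0\subseteq\mathfrak C$, because a vanishing sum of nonnegative traces forces each term to vanish) is exactly the argument the paper invokes, and your observation that the reverse direction of the preliminary Claim needs no closedness hypothesis --- the closure appears for free from the bipolar theorem --- is correct and is precisely the form needed here.

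The place where you stop is the step you yourself flag: passing from $\mathfrak C_1\cap\mathfrak C_0\subseteq\mathfrak C$ to $\Clos(\mathfrak C_1)\cap\mathfrak C_0\subseteq\Clos(\mathfrak C)$, which is what the dual inclusion $(\End(C))^\vee\cap(\Lie\Aut(C))^\vee\subseteq (M(C))^\vee$ literally requires, since the bipolar theorem only gives $(\End(C))^\vee=\Clos(\cone(S_E))$ and $(M(C))^\vee=\Clos(\cone(S_M))$. You should know that the paper's own proof elides exactly this point, by tacitly treating $\cone(S_E)$ and $\cone(S_M)$ as already closed; so you have not missed an idea that the paper supplies, you have only left unexecuted a repair that the paper also omits. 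The repair is the normalisation argument you gesture at, and for a proper cone (the case relevant to the rest of the paper) it closes cleanly: pick $f\in\Int(C)$ and $e\in\Int(C^\vee)$ and set $N(X):=\langle X,fe^T\rangle$, so that $N(x\xi^T)=(\xi^Tf)(e^Tx)\geq c\,\|x\|\,\|\xi\|$ on $S_E$ for some $c>0$. Writing each $M_j\in\mathfrak C_1$ with $M_j\to M$ as a Carath\'eodory sum of at most $n^2$ elements of $S_E$, the bound $N(M_j)\to N(M)$ dominates the norm of every summand, so all summands lie in a compact set; passing to a subsequence shows $M\in\mathfrak C_1$, i.e.\ $\mathfrak C_1$ (and likewise $\mathfrak C$) is in fact closed, after which your trace argument applies verbatim to $M$ itself. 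For a general closed convex cone one first quotients by the lineality space of $C$ to reduce to the pointed, full-dimensional case. With that paragraph added, your proof is complete and coincides with (indeed, is more careful than) the paper's.
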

\begin{proof}
Define   the following sets  of matrices:
\begin{align}\label{sets}
S_M&:=\{x\xi^T\such x\in C,\,\xi\in C^\vee, \,  x^T \xi =0\},  \nonumber\\
  S_E&:=\{x\xi^T\such x\in C,\,\xi\in C^\vee\},\\
S_L&:=\{\pm x\xi^T\such x\in C,\,\xi\in C^\vee, \,  x^T \xi =0\}\nonumber . 
\end{align}
 Theorem~\ref{thm:matzler2} asserts  that~$A\in M(C)$ iff~$\langle A,B \rangle \geq0 $ for any~$B\in S_M$. 
Arguing as in Remark~\ref{rem:using_dual} implies that~$A\in\End(C)$ iff~$\langle A,B \rangle \geq0 $ for any~$B\in S_E$.
We claim   
   that~$A\in\Lie\Aut(C)$ iff
\begin{equation}\label{ALie} 
\langle A,B \rangle \geq0 \text{ for any } B\in S_L.
\end{equation}
   To show this note that 
 Theorem \ref{thm:matzler2} implies that~\eqref{ALie}
 is equivalent to~$ A\in M(C)$ and~$-A\in M(C)$, that is,
  $\exp(tA)$ maps $C$ to itself for any~$t\in\R$.   In other words, for any $t\in\R$ the matrices $\exp(tA)$ and $(\exp(tA))^ {-1}$  map~$C$ to itself. Therefore, $\exp(tA) \in\Aut(C)$, and $A\in\Lie\Aut(C)$.

The sets $M(C),\,\End(C),\,\Lie\Aut(C)$ are  cones in $\R^{n\times n}$. It follows that their dual cones are
\begin{align*}
    (M(C))^\vee &=  \cone(S_M),\\
    (\End(C))^\vee &=  \cone(S_E),\\
    (\Lie\Aut(C))^\vee &=\cone(S_L),
\end{align*}
where  $\cone(X)$ stands for the cone generated by the set~$X$. 
Note that all three dual cones are closed and convex. 
By duality, the statement $M(C)\subseteq \Clos(\End(C)+\Lie\Aut(C))$  is equivalent to
 $\cone(S_E)\cap \cone(S_L)\subseteq \cone(S_M)$, and this indeed holds by~\eqref{sets}. 
 \end{proof}

The proof  of Theorem~\ref{thm:cone_thm}
follows from Lemmas~\ref{lemma:cone_1d} and~\ref{lemma:cone_oher}.


\end{document}